\numberwithin{equation}{section}
\theoremstyle{plain}
\newtheorem{theorem}{Theorem}
 \newtheorem{proposition}{Proposition}
 \newtheorem{corollary}{Corollary}
 \theoremstyle{definition}
\newtheorem{remark}{Remark}
\newcommand{\CC}{\mathbb{C}}
\newcommand{\PP}{\mathbb{P}}
\newcommand{\ZZ}{\mathbb{Z}}
\newcommand{\calV}{\mathcal{V}}
\newcommand{\calE}{\mathcal{E}}
\newcommand{\calF}{\mathcal{F}}
\newcommand{\calO}{\mathcal{O}}
\newcommand{\calL}{\mathcal{L}}
\newcommand{\calD}{\mathcal{D}}
\newcommand{\calK}{\mathcal{K}}
\newcommand{\lm}{\lambda}
\newcommand{\Gr}{\textup{Gr}}
\newcommand\Hom{\textup{Hom}}
\newcommand\End{\textup{End}}
\newcommand\GL{\textup{GL}}
\newcommand\gl{\mathfrak{gl}}
\newcommand{\eps}{\epsilon}
\newcommand{\lam}{\lambda}
\newcommand{\quash}[1]{}
\newcommand\mydots{\makebox[1em][c]{,\hfil.\hfil.}}
\newcommand{\calZ}{\mathcal{Z}}
\newcommand{\calY}{\mathcal{Y}}
\newcommand{\Fl}{\mathrm{Fl}}
\newcommand{\ot}{\otimes}
\title{A categorification of representations of $\Uqgloo$
  }
\author{A. Oblomkov}
\address{
A.~Oblomkov\\
Department of Mathematics and Statistics\\
University of Massachusetts at Amherst\\
Lederle Graduate Research Tower\\
710 N. Pleasant Street\\
Amherst, MA 01003 USA
}
\email{oblomkov@math.umass.edu}
\author{L. Rozansky}
\address{
L.~Rozansky\\
Department of Mathematics\\
University of North Carolina at Chapel Hill\\
CB \# 3250, Phillips Hall\\
Chapel Hill, NC 27599 USA
}
\email{rozansky@math.unc.edu}
\def\Uqglv#1{ U_q(\gl_{#1}) }
\def\Uqgloo{\Uqglv{1|1}}
\def\Uqglt{\Uqglv{2}}
\def\Uqglmn{\Uqglv{m|n}}
\def\glv#1{\gl_{#1}}
\def\glmn{ \glv{m|n} }
\def\gln{ \glv{n} }
\def\glN{ \glv{N}}
\def\gloo{ \glv{1|1}}
\def\glzt{ \glv{0|2}}
\def\gltz{ \glv{2|0}}
\def\glt{ \glv{2}}
\def\GLv#1{ \GL_{#1}}
\def\GLN{ \GLv{N}}
\def\VN{V^{\otimes N}}
\def\xH{ H }
\def\VNlm{ (\VN)_\lambda }
\def\grV{ \mathcal{V}}
\def\bfgrV{ \bm{\grV}}
\def\grVoN{ \grV^{\otimes N}}
\def\bflam{ \bm{\lam}}
\def\bfmu{ \bm{\mu}}
\def\xl{ l }
\def\Sl{\mathrm{Sl}}
\def\Slbmu{ \Sl_{\bfmu}}
\def\rmS{ \mathrm{S}}
\def\grX{ X }
\def\IC{ \mathbb{C}}
\def\glN{ \mathfrak{gl}(N)}
\def\tcnk{ \tilde{\mathfrak{n}}_k }
\def\rmTs{\mathrm{T}^* }
\def\YNlm{ Y(\lambda) }
\def\Csq{ \IC^* }
\def\ICoo{ \IC^{1|1} }
\def\ICmn{ \IC^{m|n} }
\def\xf{ f }
\def\GrkN{ \Gr(k,N) }
\def\vz{ v_0 }
\def\vo{ v_1 }
\def\cDlm{\calD(\lambda)}
\def\cDlmpt{ \calD(\lambda+2)}
\def\cDlmmt{ \calD(\lambda-2)}
\def\cElm{ \calE(\lambda) }
\def\cElmpt{ \calE(\lpt) }
\def\cFlm{ \calF(\lambda) }
\def\cFlmpt{ \calF(\lambda+2) }
\def\lmp{ \lam' }
\def\calH{\mathcal{H}}
\def\DbCs{ D^b_{\Csq}}
\def\bfq{\mathbf{q}}
\def\tXp{ \tilde{X}'}
\def\mtr#1{ \begin{bmatrix} #1 \end{bmatrix} }
\def\Wv#1{ W(#1) }
\def\Wlm{ \Wv{\lam,\lam+2}}
\def\Wlmp{ \Wv{\lam,\lmp}}
\def\Wlmlm{ \Wv{\lam,\lam}}
\def\cOWlm{ \calO_{\Wlm}}
\def\lpt{\lam + 2}
\def\xtu{ \tau}
\def\tul{  \xtu_\lam }
\def\tulpt{  \xtu_{\lpt}}
\def\ICN{ \IC^N }
\def\ICk{ \IC^k }
\def\cODl{ \calO_{\Delta} }
\def\FM{FM}
\begin{document}
\maketitle

\begin{abstract}

We categorify the action of $\Uqgloo$ on the tensor product of its vector representations $(\ICoo)^{\otimes N}$. The generators $E$ and $F$ are represented by Fourier-Mukai functors between the derived categories of coherent sheaves on the total spaces of "semi-parabolic" vector bundles over the Grassmannians $\Gr(k,N)$.

\end{abstract}

\section{Introduction}
\label{sec:introduction-}

Previously, Sartori~\cite{Sartori15} and Tian~\cite{Tian14} also categorified the $\Uqgloo$ action on
\((\CC^{1|1})^{\otimes N}\) by  different means: Sartori used the category $\calO$ and Tian used the symplectic geometry in the spirit of the Heegard homology of Ozsvath and Szabo.
Even before that Khovanov-Sussan \cite{KhovanovSussan16} used the diagrammatic approach to categorify the positive half of \(\Uqgloo\).
For most recent developments in categorical representation theory of \(\Uqgloo\) see \cite{ManionRouquier20}.
Our construction is of the algebro-geometric nature. For \(\Uqglt\) the algebro-geometric construction of the representation \((\CC^2)^{\otimes N}\) is presented by Cautis, Kamnitzer and Licata in \cite{CautisKamnitzerLicata10}.

\subsection{The quantum supergroup $\Uqgloo$ } 
We use the conventions and notations from the paper \cite{Sartori15}, for earlier discussion of this quantum
group see Appendix 1 of \cite{RozanskySaleur93}.
The weight lattice \(P\) of $\Uqgloo$ is spanned by \(\eps_1,\eps_2\) and the coweight lattice \(P^*\) is spanned by \(h_1,h_2\) with the natural pairing
\(\langle \eps_i,h_j\rangle=\delta_{ij}\). There are two simple roots \(\pm\alpha\), \(\alpha=\eps_1-\eps_2\) 
The algebra \(\Uqgloo\) has four generators \(E,F\) and \(q^{h_1},q^{h_2}\), but for convenience we trade the latter pair for
\[
K = q^{h_1+ h_2},\qquad \xH = q^{h_1 - h_2}. 
\]
The generators satisfy the relations
\begin{equation}\label{eq:FE0} 
  E^2=0,\quad  F^2=0,
\end{equation}
\begin{equation}
  \label{eq:EF} 
  EF+FE=K-K^{-1}, 
\end{equation}
\begin{equation}
  \label{eq:grad}
  \xH E = q^2 E \xH,\qquad \xH F = q^{-2} F\xH,
\end{equation}
\begin{equation}
  \label{eq:qh}
KE = EK,\quad KF = FK,\qquad K\xH = \xH K,
\end{equation}
so $K$ is central.

In order to obtain the standard relations for the quantum group \(\Uqgloo\), one has to rescale the generator $E$, that is, replace it by
\[\tilde{E}=E/(q-q^{-1}).
\]

The Hopf algebra structure of $\Uqgloo$ is described by the relations
\[\Delta(E)=E\ot K^{-1}+1\ot E,\quad \Delta(F)=F\ot 1+K\ot F,\]
\[S(E)=-EK,\quad S(F)=-K^{-1}\ot F,\]
\[\Delta(q^h)=q^h\ot q^h,\quad S(q^h)=q^{-h},\]
\[u(E)=u(F)=0,\quad u(q^h)=1,\]
where \(S\) and \(u\) are antipod and counit.



\subsection{Representation theory}
\label{sec:repr-theory}
Denote by $V=\IC^{1|1}$ the defining 2-dimensional representation of $\Uqgloo$. The action of the generators $E,F,K,H$ is described by the following matrices relative to the standard basis $\vz$ (even) and $\vo$ (odd):
\[
E = \mtr{0 & q - q^{-1}\\ 0 & 0},\quad 
F = \mtr{ 0 & 0 \\ 1 & 0},\quad 
K = \mtr{q & 0 \\ 0 & q},\quad
H = \mtr{q & 0 \\ 0 &  q^{-1}}.
\]


The action of the generators on the tensor power $\VN$ is determined by the comultiplication $\Delta$. 


The generator $K$ acts on $\VN$ by the multiplication by $q^N$, and $\VN$ splits into the weight spaces according to the eigenvalues of $\xH$:
%
\begin{equation}
\label{eq:wght}
\VN = \bigotimes_{k=0}^N (\VN)_{N-2k}, 
\end{equation}
the space $\VNlm$ (for $\lambda=N,N-2,\ldots,-N$) being the eigenspace of $\xH$ with the eigenvalue $q^{\lambda}$.
We use notation \(E(\lm)\), \(F(\lm)\) for the corresponding linear maps
\[E(\lm)\colon (V^{\otimes N})_\lm\to (V^{\otimes N})_{\lm+2},\qquad   F(\lm)\colon (V^{\otimes N})_\lm\to (V^{\otimes N})_{\lm-2},\]
\[ H(\lm),K(\lm)\colon (V^{\otimes N})_\lm\to (V^{\otimes N})_{\lm}\]

%
%


 \subsection{Categories}

Throughout the paper we maintain the relation
\[
\boxed{\lambda = N - 2k}
\]
where $\lambda = -N,\ldots,N$ is the weight index of~\eqref{eq:wght}, while $k =  0,\ldots,N$ is the parameter of the Grassmannian $\GrkN$
in the constructions below.

Our categorification of $\gloo$ is based on Calabi-Yau varieties $\YNlm$ which, similar to $\rmTs\GrkN$, 
are the total spaces of  vector bundles $\YNlm\rightarrow\GrkN$: 
\[
\YNlm=\{(\grV,\grX)\,|\,\grV\subset \IC^N,\;\dim \grV = k,\;\grX\in\Hom(\IC^N,\grV)\},
\]
that is, the fibers of $\YNlm$ are half-nilpotent and half-parabolic.
Equivalently,
\[
\YNlm= \bigl(\GLN\times\tcnk\bigr)/P_k,
\]
where 
\[
\tcnk =\{X\in\glN\;|\;X(\ICN)\subset\ICk\},
\]
$\ICk\subset\ICN$ being the standard $k$-dimensional subspace, and $P_k\subset\GLN$ is the parabolic subgroup:
\[
 P_k = \{
g\in \GLN\;|\;g(\ICk)\subset \ICk\}.
\]

%

The group $\Csq$ acts on $\YNlm$ by scaling the fibers: $\grX\mapsto t^2 \grX$.
To the weight space $\VNlm$ we associate 
 the 
derived category of
\(\Csq\)-equivariant bounded complexes of coherent sheaves over $\YNlm$:
\begin{equation}
\label{eq:cats}
\calD(\lambda)=D_{\CC^*}^b\bigl(Y(\lambda)\bigr).
\end{equation}

%

%
%

\subsection{Functors}
\label{sec:functors}

Having selected the categories~\eqref{eq:cats}, we choose the functors
\[
\begin{tikzcd}
\cDlmmt && \cDlm \arrow[rr,"\calE(\lm)"] \arrow[ll,"\calF(\lm)"'] 
\ar[looseness=6, out=300, in=240, "{\calK(\lm),\calH(\lm)}"]
& &
\cDlmpt
\end{tikzcd}
\]
corresponding to (the matrix elements of) the generators $E,F,K,H$ of $\Uqgloo$ in such a way that these functors satisfy relations categorifying
(up to a shift) the relations~
\eqref{eq:FE0}-\eqref{eq:qh}.

Let $\bfq$ denote the endo-functor of $\cDlm$ that shifts the weights of the $\Csq$ action by  \(1\). Then the obvious choice for $\calH(\lm)$ and $\calK(\lm)$ is
\(
\calK(\lm) = \bfq^\lambda,\) \( \calH(\lm) = \bfq^N.
\)

In the paper we use the standard Fouries-Mukai functor formalism. In particular, an object \(\calZ\in D^b_{\CC^*}\bigl(\calY(\lm),\calY(\lm')\bigr)\) yields
a functor \(\calD(\lm)\to \calD(\lm')\). We denote the functor also as \(\calZ\) in order to keep the notations simple. The sheaf \(\calZ\) is
called the FM kernel of the corresponding functor.

Now we define the FM kernels for the functors \(\calE(\lm)\) and \(\calF(\lm)\).
For two weights $\lam,\lmp$ such that $\lmp\geq\lam$ and $\lmp-\lam$ is even, define a smooth subvariety $\Wlmp\subset Y(\lam)\times Y(\lmp)$:
\[
\Wlmp=\{ (\grV,X,\grV',X')\;|\;\grV\subset \grV', X'=X\}, \]
where the condition $X'=X$ has the following precise meaning. Since $\grV$ is required to be a subspace of $\grV'$, we can further require $p\circ X' =0$, where $p\colon \grV'\rightarrow \grV'/\grV$. Thus $X'\colon \IC^N\rightarrow \grV'$ is reduced to $\tXp\colon \IC^N\rightarrow \grV$, and we finally require $\tXp=X$.


Let $\tul\rightarrow Y(\lam)$ denote the pull-back of the tautological bundle over $\GrkN$: the fiber of $\tul$ over a point $(\grV,\grX)\in Y(\lam)$ is the vector space $\grV$. We define
\begin{gather*}
\cElm  = \cOWlm \otimes \pi^*_{\lambda} (\det \tau_\lambda)
\\
\calF(\lambda+2)  =  \cOWlm \otimes \pi^*_{\lam} (\det \tul)^{N-k} \otimes \pi^*_{\lpt} (\det\tulpt)^{-N+k}\,
\end{gather*}


Finally, note that the (pushed forward) structure sheaf $\cODl$ of the diagonal
$\Delta=\Wlmlm\subset Y(\lam)\times Y(\lam)$ is the FM kernel of the identity functor.


The following is our main result:
\begin{theorem}
  Thus defined, the sheaves $\cElm$ and $\cFlm$ satisfy the following relations: 
\begin{equation}\label{eq:nilp}
\cElmpt \circ \cElm \simeq 0,\qquad \cFlm \circ \cFlmpt \simeq 0,
\end{equation}
\begin{equation}\label{eq:cone}
  [\calO_{\Delta}[N-k-1]  \xrightarrow{\;\;\alpha\;\;}     \calF(\lam+2) \circ \calE(\lam)]
\simeq
  [\mathbf{q}^{2N}\calO_{\Delta}[k-N-1] \xrightarrow{\;\;\beta\;\;}
  \calE(\lam-2)\circ \calF(\lam)] 
  \end{equation}
where $\alpha$ and $\beta$ are the appropriate morphisms between the corrresponding sheaves to be specified in the proof.
\end{theorem}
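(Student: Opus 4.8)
The plan is to carry out every composition at the level of Fourier--Mukai kernels, using the convolution
\[
\calZ_2\circ\calZ_1 \;=\; R\pi_{13*}\bigl(\pi_{12}^{*}\calZ_1\Ltimes\pi_{23}^{*}\calZ_2\bigr)
\]
on the relevant triple product $\YNlm\times Y(\mu)\times Y(\nu)$, together with the projection formula and base change. The geometric input is the identification of each composite correspondence as a variety of nested subspaces subject to the nilpotency constraint $X'=X$ on the linear data: the support of $\cElmpt\circ\cElm$ is the space of flags with two consecutive codimension--one steps, while the supports of $\cFlmpt\circ\cElm$ and of $\calE(\lam-2)\circ\cFlm$ are the ``$V$--shaped'' configurations in which the two outer subspaces meet a common codimension--one neighbour. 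I would first record, for each composite, the reduced support together with the line bundle obtained by convolving the $\det\tau$--twists, and separately the derived contribution of the non-transverse intersection in the $\grX$--directions, which is a Koszul complex governed by the conditions $X'=X$.

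For the nilpotency \eqref{eq:nilp} I would observe that the composite correspondence of $\cElm$ fibers over the locus of codimension--two pairs $\grV''\subset\grV\subset\ICN$ with fiber the $\PP^{1}=\PP(\grV/\grV'')$ of intermediate subspaces $\grV'$. Tracking the twists, $\det\tul$ is pulled back from the base while $\det\tulpt=\det\grV'$ restricts on this $\PP^{1}$ to the tautological $\calO(-1)$; since $R\Gamma(\PP^{1},\calO(-1))=0$ the pushforward $R\pi_{13*}$ annihilates the kernel fiberwise, so $\cElmpt\circ\cElm\simeq 0$. The vanishing $\cFlm\circ\cFlmpt\simeq 0$ follows from the analogous $\PP^{1}$--fibration once the additional $\det\tau$--powers carried by $\cFlm$ are accounted for, or by a duality argument adjoint to the first.

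The substance of the theorem is the cone relation \eqref{eq:cone}. I would compute $\cFlmpt\circ\cElm$ and $\calE(\lam-2)\circ\cFlm$ separately, in each case stratifying the composite according to whether the two $\YNlm$--subspaces coincide. On the open off--diagonal stratum, where the two subspaces meet in codimension one, the projection to $\YNlm\times\YNlm$ is an isomorphism onto the \emph{same} reduced subvariety for both products and transports the \emph{same} convolved line bundle; hence the two compositions have a common off--diagonal term $\calC$. On the diagonal the intersection is excess: the naive fiber is $\PP^{k-1}=\PP(\grV^{*})$ for the first product and $\PP^{N-k-1}=\PP(\ICN/\grV)$ for the second, and the diagonal contribution is the cohomology of the convolved line bundle on this projective space, tensored with the Koszul complex of the excess $\grX$--directions. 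I expect this to collapse to a single shifted and twisted copy of the diagonal, namely $\cODl[N-k-1]$ for $\cFlmpt\circ\cElm$ and $\bfq^{2N}\cODl[k-N-1]$ for $\calE(\lam-2)\circ\cFlm$, the weight $\bfq^{2N}$ recording the $\Csq$--scaling of the excess fiber directions. These are exactly the sources of $\alpha$ and $\beta$, realized as the inclusions of the extremal diagonal summand; the cone of each is the common term $\calC$, and the resulting identification of cones is precisely \eqref{eq:cone}.

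The main obstacle will be the diagonal computation. One must control the derived self--intersection forced by $X'=X$ and show that the Koszul differential, together with Serre duality on the projective fiber, produces exactly the homological shift $N-k-1$ (respectively $k-N-1$) and the equivariant weight $\bfq^{2N}$ asserted in \eqref{eq:cone}, and no further terms. This requires careful bookkeeping of the conormal bundle of $\Wlm$, of the tautological twists $\det\tul$ and $\det\tulpt$, and of their interaction with the $\Csq$--action. A secondary delicate point is to check that the two off--diagonal terms $\calC$ are identified \emph{canonically}, so that the cones of $\alpha$ and $\beta$ agree on the nose rather than merely abstractly, which is what the isomorphism \eqref{eq:cone} asserts.
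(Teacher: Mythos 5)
Your argument for \eqref{eq:nilp} is exactly the paper's: the triple correspondence $W(\lambda-2,\lambda,\lambda+2)$ fibers in $\PP^1$'s over $Y(\lambda-2)\times Y(\lambda+2)$, the convolved determinant twists restrict to $\calO(-1)$ on each fiber, and $R\pi_*$ kills the kernel; no issues there. Your predicted diagonal terms for \eqref{eq:cone} --- one copy of $\calO_{\Delta}[N-k-1]$ attached to $\calF(\lambda+2)\circ\calE(\lambda)$ and one copy of $\mathbf{q}^{2N}\calO_{\Delta}[k-N-1]$ attached to $\calE(\lambda-2)\circ\calF(\lambda)$, with $\mathbf{q}^{2N}$ coming from the equivariant weight of the $N$ Koszul directions --- also agree with the paper's outcome. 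But the route you propose for deriving \eqref{eq:cone} has a genuine gap, and it sits precisely at the point you label ``secondary.'' Stratifying $Y(\lambda)\times Y(\lambda)$ into diagonal and off-diagonal loci does not produce a well-defined ``common off-diagonal term $\calC$'': restriction to the open stratum yields only a localization triangle with a local-cohomology third term, not a direct summand, and neither composition splits as (diagonal piece) $\oplus$ (off-diagonal piece) --- the theorem asserts nontrivial cones, i.e.\ nontrivial extensions. Knowing $j^*\bigl(\calF(\lambda+2)\circ\calE(\lambda)\bigr)\simeq j^*\bigl(\calE(\lambda-2)\circ\calF(\lambda)\bigr)$ off the diagonal, together with the two separately computed diagonal contributions, determines neither object: the glueing/extension data are exactly what is at stake, and your plan contains no construction of the morphisms $\alpha$, $\beta$, nor of the isomorphism between their cones. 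Two objects can agree on an open set, have the prescribed diagonal corrections, and still fail the asserted cone identity.

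The paper's proof supplies the missing mechanism, and it is the heart of the argument. Both composite correspondences are pulled back to a common blow-up: $Z^{\pm}=\{\calV_{k-1}\subset\calV'_k,\calV''_k\subset\calV_{k+1}\}$, with exceptional divisor $E$ fibering in $\PP^{k-1}\times\PP^{N-k-1}$ over $\Gr(k,N)$, and a bundle $M^{\pm}$ over it mapping by $\pi_+,\pi_-$ to the two (vector-bundle) models $M^+$, $M^-$ of the $FE$ and $EF$ correspondences. On $M^{\pm}$ the two kernels become Koszul complexes for the sections $s_\tau$ and $s_\tau\cdot s_\Delta$ of the \emph{same} bundle, where $s_\Delta$ cuts out $E$ and $\calO(E)$ is identified in \eqref{eq:O(delta)}; note in particular that the $EF$ triple intersection is \emph{not} transverse, so its kernel is genuinely this Koszul complex rather than a structure sheaf. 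The interpolated Koszul complexes $K^I(\calL,t;V^\vee,s)$ of Section~\ref{sec:kosz-compl-gener}, together with the one-step cone proposition \eqref{eq:cone-add}, produce a single object $\widehat{K}$ admitting two iterated-cone presentations \eqref{eq:cone-}, \eqref{eq:cone+} whose extreme ends are the two kernels. Pushing forward, all intermediate terms (supported on $E$, twisted by negative powers of $\det\tau_{\lambda\pm2}$ on the $\PP^{k-1}$ resp.\ $\PP^{N-k-1}$ fibers of $\pi_\mp$) vanish except one on each side, which the relative dualizing sheaf computation converts into exactly your two diagonal terms. Both sides of \eqref{eq:cone} are then literally the pushforward of the same $\widehat{K}$, so the identification of the cones is automatic rather than something to be checked afterwards. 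Finally, the paper treats $\lambda=\pm N$ separately (there $Y(\pm N)$ degenerates, one composition is absent, and the other is computed directly, e.g.\ $\calE(N-2)\circ\calF(N)\simeq\calO[N-1]\oplus\mathbf{q}^{2N}\calO[-N]$); your generic stratification degenerates in these cases and they would need the same separate treatment.
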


The decategification of the above action is obtained by passing to the localized
\(\CC^*\)-equivariant \(K\)-theory of categories \(\calD(\lm)\):

\begin{corollary} The  \(2^N\)-dimensional  vector space \[ \bigoplus_{\lambda} \mathrm{K}^{\mathrm{loc}}_{\CC^*}\bigl(Y(\lm)\bigr) \]
  over the field of fractions of \(\mathrm{K}_{\CC^*}(\mathrm{pt})\) is a representation of \(\Uqgloo\) via:
  \[E(\lambda)\mapsto \mathbf{q}^{-N}\calE(\lambda),\quad F(\lm)\mapsto \calF(\lm)[N-k],\quad K(\lm)\mapsto \mathbf{q}^\lm \calO_\Delta,
    \quad H(\lm)\mapsto \mathbf{q}^N\calO_{\Delta}.
  \]  
\end{corollary}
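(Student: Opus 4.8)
The plan is to deduce the Corollary from the Theorem purely formally, by applying the localized $\CC^*$-equivariant $K$-theory functor to the categorical relations \eqref{eq:nilp}--\eqref{eq:cone} and reading off the defining relations \eqref{eq:FE0}--\eqref{eq:qh} of $\Uqgloo$. First I would fix the decategorification dictionary. Set $R=\mathrm{K}_{\CC^*}(\mathrm{pt})=\ZZ[q^{\pm 1}]$ and $\mathbb{K}=\mathrm{Frac}(R)$. Every Fourier--Mukai kernel on $Y(\lambda)\times Y(\lambda')$ induces a $\mathbb{K}$-linear operator $\mathrm{K}^{\mathrm{loc}}_{\CC^*}(Y(\lambda))\to \mathrm{K}^{\mathrm{loc}}_{\CC^*}(Y(\lambda'))$; convolution of kernels becomes composition of operators, the endofunctor $\mathbf{q}$ becomes multiplication by $q$, the shift $[j]$ becomes multiplication by $(-1)^j$, a distinguished triangle becomes additivity of classes, and a zero object becomes the zero operator. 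Because $\CC^*$ scales the fibres of the vector bundle $Y(\lambda)\to\GrkN$, pullback along this projection is an isomorphism on localized equivariant $K$-theory, so $\mathrm{K}^{\mathrm{loc}}_{\CC^*}(Y(\lambda))\cong \mathrm{K}(\GrkN)\otimes_{\ZZ}\mathbb{K}$ has dimension $\binom{N}{k}$; summing over $\lambda=N-2k$ gives total dimension $\sum_{k=0}^{N}\binom{N}{k}=2^N$, as claimed. Since $\Uqgloo$ is presented by generators and relations, it remains only to check that the assigned operators satisfy \eqref{eq:FE0}--\eqref{eq:qh}.

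Two families of relations are immediate. The nilpotency \eqref{eq:nilp} exhibits $\calE(\lambda+2)\circ\calE(\lambda)$ and $\calF(\lambda)\circ\calF(\lambda+2)$ as zero objects, so their operators vanish and $E^2=F^2=0$, which is \eqref{eq:FE0}. The Cartan relations \eqref{eq:grad} and \eqref{eq:qh} are scalar identities: the two Cartan operators act on $\mathrm{K}^{\mathrm{loc}}_{\CC^*}(Y(\lambda))$ by the scalars $q^{N}$ and $q^{\lambda}$. The weight-independent scalar $q^{N}$ commutes with every operator, yielding the centrality relations \eqref{eq:qh}, while the weight-dependent scalar $q^{\lambda}$, composed with $\calE$ and $\calF$ --- which raise and lower the weight index by $2$ --- acquires the factors $q^{\pm 2}$ recorded in \eqref{eq:grad}. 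Both follow by comparing the $\mathbf{q}$-weight on the source and target weight spaces and use nothing beyond the definitions.

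The substance of the argument is the anticommutator relation \eqref{eq:EF}, extracted from the cone relation \eqref{eq:cone}. Taking $K$-classes of the two mapping cones and using additivity along the defining two-term complexes, the equivalence \eqref{eq:cone} collapses to an identity of endomorphisms of $\mathrm{K}^{\mathrm{loc}}_{\CC^*}(Y(\lambda))$ of the form
\[
[\calF(\lam+2)\circ\calE(\lam)]-[\calE(\lam-2)\circ\calF(\lam)]=\bigl((-1)^{N-k-1}-q^{2N}(-1)^{k-N-1}\bigr)[\calO_\Delta].
\]
Since $(-1)^{N-k-1}=(-1)^{k-N-1}$ and $[\calO_\Delta]=\mathrm{Id}$, the right-hand side is a scalar multiple of the identity. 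Inserting the prefactors of the assignment, $E(\lambda)\mapsto q^{-N}[\calE(\lambda)]$ and $F(\lambda)\mapsto(-1)^{N-k}[\calF(\lambda)]$ --- and keeping track that the shift $N-k$ attached to each $\calF$-factor is computed from the source weight of that factor, so that the two compositions enter with opposite signs --- the left-hand side becomes the anticommutator $EF+FE$ and the right-hand side becomes $\pm(q^{N}-q^{-N})\,\mathrm{Id}$.

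The main obstacle I anticipate is precisely this sign-and-grading reconciliation in the final step: one must check that the homological shifts $[N-k-1]$, $[k-N-1]$ on the diagonal in \eqref{eq:cone}, the shift $[N-k]$ built into the $F$-operator, and the twist $\mathbf{q}^{-N}$ on the $E$-operator combine so that both the overall sign and the power of $q$ on the right come out correctly, producing the central element $K-K^{-1}=q^{N}-q^{-N}$ rather than its negative or a shifted power. Everything else in the passage from the Theorem to the Corollary is a formal application of the decategorification dictionary; the care lies entirely in fixing the conventions for the shift functor and for the two-term complexes so that these parities are right. A concluding comparison of dimensions and weight-space characters then identifies the resulting module with the tensor power $\VN$.
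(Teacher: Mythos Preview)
The paper does not write out a proof of this Corollary; it is presented as an immediate consequence of the Theorem, followed only by the remark that a dimension comparison identifies the resulting module with $(\CC^{1|1})^{\otimes N}$. Your plan---applying localized equivariant $K$-theory to \eqref{eq:nilp} and \eqref{eq:cone}, using the Thom isomorphism to get $\dim_{\mathbb{K}}\mathrm{K}^{\mathrm{loc}}_{\CC^*}(Y(\lambda))=\binom{N}{k}$ and hence total dimension $2^N$, and then reading off \eqref{eq:FE0}--\eqref{eq:qh}---is exactly the intended derivation, and your flagging of the sign/shift bookkeeping in the anticommutator step (in particular that the shift $[N-k]$ on $\calF$ depends on the \emph{source} weight, so the two compositions acquire opposite signs) as the one delicate point is apt.
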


By comparing the dimensions of the weight spaces one can conclude that the \(\Uqgloo\)-representation from the corollary above
is exactly \((\CC^{1|1})^{\otimes N}\).

\begin{remark}
  We use the cohomological conventions throughout the paper. The term $C_i$ of a chain complex
  $\cdots C_2\to C_1 \to C_0$ has the cohomological degree $-i$ and in our shift conventions 
   \((C[n])_k=C_{k-n}\).
  
\end{remark}

This note has three sections. We present a proof of the main theorem in section~\ref{sec:mainthm}. In section~\ref{sec:next} we present a proposal
that extend our construction to the case of \(\Uqglmn\).

{\bf Acknowledgments}
We would like to thank Jenia Tevelev
for useful discussions.
The work of A.O. was supported in part by  the NSF grant DMS--2200798.
The work of L.R. was supported in part by  the Simons Collaboration Grant \emph{"New Structures in Low-Dimensional Topology"} 5126097.

\section{Proof of the main theorem}
\label{sec:mainthm}


\subsection{Nilpotent relation}
\label{sec:nilpotent-relation}

Let us first prove the relations \eqref{eq:nilp}. The kernels of the compositions \(\calE(\lambda)\circ \calE(\lambda-2)\)
and of \(\calF(\lambda)\circ\calF(\lambda+2)\) are the push-forwards along the projection \(\pi_{\lambda-2,\lambda+2}\) from
\(Y(\lambda-2)\times Y(\lambda)\times Y(\lambda+2) \) of the kernels
\[\calO_{W(\lambda-2,\lambda,\lambda+2)}\otimes \pi^*_\lambda(\det\tau_\lambda)\otimes\pi_{\lambda+2}^*(\det\tau_{\lambda+2}) ,\]
\[
  \calO_{W(\lambda-2,\lambda,\lambda+2)}\otimes\pi^*_{\lambda-2}(\det\tau_{\lambda})^{N-k-1}\otimes \pi^*_\lambda(\det\tau_\lambda)\otimes\pi_{\lambda+2}(\det\tau_{\lambda+2})^{k-N},\]
where the variety \(W(\lambda-2,\lambda,\lambda+2)\) consists of the triples \((\calV,X),(\calV',X'),(\calV'',X'')\)
inside \(Y(\lambda-2)\times Y(\lambda)\times Y(\lambda+2)\)
such that
\[\calV\subset \calV'\subset \calV'', \quad X=X'=X''.\]

Over a point  of \( Y(\lambda-2)\times Y(\lambda+2)\) the fiber of the projection \(\pi_{\lambda-2,\lambda+2}\) intersected with \(W(\lambda-2,\lambda,\lambda+2)\) is a \(\PP^1\). Moreover, the restriction of the above described FM kernel on this \(\PP^{1}\) is
a line bundle \(\calO(-1)\) that has no homology. Hence the push-forward along \(\pi_{\lambda-2,\lambda+2}\) of both kernels are homotopic to zero.

\subsection{Koszul complexes and generalizations}
\label{sec:kosz-compl-gener}

We use  generalization of the Koszul complexes in our arguments. Here we fix notations for these complexes and corresponding differentials.
For a given vector bundle \(V\) on variety  \(X\) a section \(s\in H^0(X,\calV)\)
defines a the zero-section subvariety \(Z(s)\subset X\).
Let \(V^\vee\) be a dual vector bundle then the  complex 
Respectively, the Koszul complex \(K(V^\vee,s)=\oplus_{j=0}^rK_{-j}(V^\vee,s)\), \(K_{-j}(V^\vee,s)=\Lambda^jV^\vee\)
is a complex of vector bundles such that the zeroth  homology of this complex is the sheaf \(\calO_{Z(s)}\). Moreover, if the
intersection is transverse then \(\calO_{Z(s)}\) is homotopy equivalent to the complex \(K(V^\vee,s)\).

To define the differential in \(K(V^\vee,s)\) a cosection \(\check{s}: \calV^\vee\to \calO_X\) that is dual to \(s\).
On a sufficiently small affine chart \(U\subset X\) we can trivialize \(V^\vee\) by choosing
sections \(v_i\in H^0(U,V^\vee)\), \(i=1,\dots, \mathrm{rank}(V)\) that span every fiber of \(V^\vee\) over \(U\).
Then the differential \(D_{s}^j:\Lambda^jV^\vee\to \Lambda^{j-1}V^\vee \) is defined as:
\[ D^j_{s}v_{i_1}\wedge\dots \wedge v_{i_s}= \sum_{l=1}^s (-1)^l\check{s}(v_{i_l})v_{i_1}\wedge\dots \widehat{v}_{i_l}\dots \wedge v_{i_s}.
   \]

   In our arguments below we need a slight generalization of the Koszul complex above. Let us fix a line bundle \(\calL\), \(t\in H^0(X,\calL)\). The generalizations that we need interpolate between the Koszul complexes \(K(V^\vee,s)\) and \(K(V^\vee\otimes \calL^\vee,s\cdot t)\). For \(I\subset \ZZ\) we define \(K^I(\calL,t;V^{\vee},s)\) as complex with
  \[K^I_{-j}(\calL,t;V^\vee,s)=\Lambda^jV^{\vee}\otimes (\calL^\vee)^{d(I,j)},\quad d(I,j)=|I\cap [1,j]|. \]
  and the differential \[D_{t;s}^{I;j}: K^I_{-j}(\calL,t;V^\vee,s)\to K^I_{-j+1}(\calL,t;V^\vee,s)\]
  is defined by
  \[D_{t;s}^{I;j}=\begin{cases} D_s^j,& j\notin I\\
    tD^j_{s},& j\in I
  \end{cases}.\]

If the sections of the vector bundles are clear from the context then we abbreviate the notation for the generalized
Koszul complex to \(K^I(\calL;\calV^\vee)\).

\subsection{Highest weight space}
\label{sec:exter-weight-spac}

First, let us compute \(\calE(N-2)\circ\calF(N)\). The space \(Y(N)\) is a point and \(Y(N-2)=\calO(-1)^{\oplus N}\) where
\(\calO(-1)\) is a tautological line bundle over \(\PP^{N-1}\).
Respectively, the correspondence variety  \(W(N-2,N)\subset Y(N)\times Y(N-2)\) is the zero section \(\PP^{N-1}\subset Y(N-2)\).
Thus the FM kernel for \(\calE(N-2)\circ \calF(N)\) is the sheaf:
\[\calO_{\PP^{N-1}}\otimes \calO_{\PP^{N-1}}\otimes \det(\tau_{N-2})^{-N}.\]

Let \(\pi: Y(N-2)\to \PP^{N-1}\) is the projection on \(\PP^{N-1}\).
There is a natural cosection \(\check{s}_\tau: \pi^*\calO(1)^{\oplus N}\to \calO\), that is dual to the section \(s\), and
\(\calO_{\PP^{N-1}}=K(\pi^*\calO(1)^{\oplus N},s_\tau)\). Thus \[\calO_{\PP^{N-1}}\otimes \calO_{\PP^{N-1}}=\oplus_{j=0}^N \mathbf{q}^{2j}\calO(j)\otimes \calO_{\PP^{N-1}}\otimes\Lambda^j\CC^N.\]
Since \(\det(\tau_{N-2})^{-N}=\calO(-N)\), the convolution \(\calE(N-2)\circ \calF(N)\) is the direct sum
\[\calE(N-2)\circ \calF(N)\simeq \calO[N-1]\oplus \mathbf{q}^{2N}\calO[-N].\]

\subsection{Relation between the cones, \(\lambda\ne \pm N\)} 
First, let us study the functor \(\calF(\lm+2)\circ\calE(\lm)\). The kernel of this functor is the push-forward along
the map \(\pi_{\lm',\lm''}\) of the sheaf \(\calO_{FE}\) on \(Y(\lm')\times Y(\lm+2)\times Y(\lm'')\):
\[\calO_{FE}=\calO_{W(\lm',\lm+2,\lm'')}\otimes\pi^*_{\lm'}(\det\tau_{\lm'})\otimes \pi_{\lm''}^*(\tau_{\lm''})^{N-k}\otimes\pi_{\lm+2}^*(\tau_{\lm+2})^{k-N},\]
here and everywhere below we use notation \(\lm'\) and \(\lm''\) for the first and second copy of \(\lm\) to distinguish them.
In the last formula the scheme \(W(\lm',\lm+2,\lm'')\) is defined as a set of triples \((\calV'_{k-1},X'),(\calV_{k},X),(\calV''_{k-1},X'')\)
that satisfy:
\begin{equation}\label{eq:W-FE}
  \calV_{k-1}\subset \calV'_k,\quad \calV_{k-1}\subset\calV''_{k},\quad X=X'=X''.
\end{equation}
We use subindices indicate the dimensions of corresponding spaces.
By the dimension count one can see that \(W(\lm',\lm+2,\lm'')\) is a transverse intersection of \(W(\lm',\lm+2)\times Y(\lm'')\)
and \(Y(\lm')\times W(\lm+2,\lm'')\). In particular, the scheme \(W(\lm',\lm+2,\lm'')\) is vector bundle over the smooth space
inside \(\Gr(k,N)\times\Gr(k-1,N)\times\Gr(k,N)\) defined as 
\[Z^+=\{(\calV'_k,\calV_{k-1},\calV''_{k})|\calV_{k-1}\subset \calV'_k,\calV_{k-1}\subset \calV''_k\}.\]

Let us denote by \(M^+\) the total space of the sum of vector bundles \((\CC^N)^\vee\otimes \tau_{\lm'}\),
  over the space \(Z^+\).
We use notation \(\tau_{\lm'}\), \(\tau_{\lm+2}\) and \(\tau_{\lm''}\) for the pull-backs of the corresponding line bundles on
\(Z^+\) to \(M^+\).  Let us also define the map \(p_+:M^+\to Y(\lm')\times Y(\lm'')\) by the formula:
\[p_+(\calV'_k,\calV_{k-1},\calV''_k;X)=((\calV'_k,X),(\calV'_k,X)).\]

Over \(M^+\) the vector bundle \((\CC^N)^\vee\otimes \tau_{\lm'}/\tau_{\lm+2}\) 
have tautological section \(s_{\lm',\lm+2}\) and \(s_{\lm'',\lm+2}\) of these vector bundles. The space \(\hat{W}(\lm',\lm+2,\lm'')\) is
a zero locus of this section and it is isomorphic the space \(W(\lm',\lm+2,\lm'')\).
Let us denote by \(\widehat{\calO}_{FE}\in D(M^+)\) the corresponding sheaf:
\begin{equation}\label{eq:kerFE}
  \widehat{\calO}_{FE}=\calO_{\hat{W}(\lm',\lm+2,\lm'')}\otimes \pi^*(\det \tau_{\lm'})^{k+1-N}\otimes\pi^*(\det \tau_{\lm-2})^{N-k},
\end{equation}

Thus the projection map \(p_+: M^+\to Y(\lm')\times Y(\lm'') \) is proper and \(\calF(\lm+2)\circ\calE(\lm)\) has FM kernel
\(p_{+*}(\widehat{\calO}_{FE})\):
\begin{equation}
  \label{eq:kerFE-2}
  \calF(\lm+2)\circ\calE(\lm)=
p_{+*}(\widehat{\calO}_{FE})
\end{equation}

Second, we study the functor \(\calE(\lm-2)\circ\calF(\lm)\). The kernel of this functor is the push-forward along
the map \(\pi_{\lm',\lm''}\) of the sheaf \(\calO_{EF}\) on \(Y(\lm')\times Y(\lm-2)\times Y(\lm'')\):
\[\calO_{EF}=\calO_{W(\lm',\lm-2,\lm'')}\otimes  \pi^*(\det \tau_{\lm'})^{k+1-N}\otimes\pi^*(\det \tau_{\lm-2})^{N-k}.\]
Here the scheme \(W(\lm',\lm-2,\lm'')\) is defined as a set of triples  \((\calV'_{k},X'),(\calV_{k-1},X),(\calV''_{k},X'')\)
that satisfy:
\begin{equation}\label{eq:W-EF}
  \calV_{k+1}\supset \calV'_k,\quad \calV_{k+1}\supset\calV''_{k},\quad X=X'=X''.
\end{equation}
We use subindices indicate the dimensions of corresponding spaces.
By the dimension count one can see that \(W(\lm',\lm-2,\lm'')\) is not a transverse intersection of \(W(\lm',\lm-2)\times Y(\lm'')\)
and \(Y(\lm')\times W(\lm-2,\lm'')\).
The scheme \(W(\lm',\lm-2,\lm'')\) is vector bundle over the smooth space
inside \(\Gr(k,N)\times\Gr(k+1,N)\times\Gr(k,N)\) defined as 
\[Z^{-} = \{(\calV'_k,\calV_{k+1},\calV''_{k})|\calV_{k+1}\supset \calV'_k,\calV_{k+1}\supset \calV''_k\}.\]

Let us denote by \(M^-\) the total space of the sum of the vector bundles \((\CC^N)^\vee\otimes \tau_{\lm'}\),
\((\CC^N)^\vee\otimes \tau_{\lm-2}\)  over the space \(Z^-\). We use notation \(\tau_{\lm'}\), \(\tau_{\lm-2}\) and \(\tau_{\lm''}\) for the pull-backs of the corresponding line bundles on
\(Z^-\) to \(M^-\).

Let us denote by
\(\hat{W}(\lm',\lm-2,\lm'')\) the isomorphic image of \(W(\lm',\lm-2,\lm'')\) inside
\(M^-\). Respectively, there is an object  \(\widehat{\calO}_{EF}\in D(M^-)\):
\begin{equation}\label{eq:kerEF}
  \widehat{\calO}_{EF}=\calO_{\hat{W}(\lm',\lm-2,\lm'')}\otimes \pi^*(\det \tau_{\lm'})^{k+1-N}\otimes\pi^*(\det \tau_{\lm-2})^{N-k}.
\end{equation}

We define the map \(p_-: M^-\to Y(\lm')\times Y(\lm'')\) by the formula:
\[p_-(\calV'_k,\calV_{k+1},\calV''_k;X)=((\calV'_k,X),(\calV''_k,X)).\]
The projection map \(p_-\) proper and \(\calE(\lm-2)\circ\calF(\lm)\) has FM kernel
\(p_{-*}(\calO_{EF})\):
\begin{equation}
  \label{eq:kerEF-2}
 \calE(\lm-2)\circ\calF(\lm)=
p_{-*}(\calO_{EF}).
\end{equation}

The spaces \(Z^+\) and \(Z^-\) have a common blow-up which we denote by \(Z^{\pm}\). This space is defined as follows:
\[Z^{\pm}=\{V_{k-1}\subset \calV'_k,\calV''_k\subset \calV_{k+1}\}.\]
Respectively, we define \(M^{\pm}\) to be the total space of  the vector bundle
\((\CC^N)^\vee\otimes \tau_{\lm'}\) over
\(Z^{\pm}\).

Let us denote the maps to from \(M^{\pm}\) to \(M^+\) and \(M^-\) by \(\pi_+\) and \(\pi_-\), respectively.
These maps are defined as
\[\pi_{+}(\calV_{k-1},\calV'_k,\calV''_k,\calV_{k+1};X)=(\calV'_k,\calV''_k,\calV_{k- 1},X),\]
\[\pi_{-}(\calV_{k-1},\calV'_k,\calV''_k,\calV_{k+1};X)=(\calV'_k,\calV''_k,\calV_{k+1},X).\]

Our strategy relies on comparison of the pull-backs of \(\widehat{\calO}_{FE}\) and \(\widehat{\calO}_{EF}\) on \(M^{\pm}\).
Thus, we
denote by \(W(\lm',\lm-2,\lm'')^{\pm}\) and \(W(\lm',\lm+2,\lm'')^{\pm}\) the preimages of the corresponding varieties inside
\(M^{\pm}\). Both of these varieties are zero loci of some natural vector bundles that we detail below.

The space \(M^\pm\) is a total space of the vector bundle \((\CC^N)^\vee\otimes \tau_{\lm-2}/\tau_{\lm'}\) over \(Z^\pm\)
Let \(\pi:M^\pm\to Z^\pm\) be the projection. Then \(\pi^*((\CC^N)^\vee\otimes \tau_{\lm-2}/\tau_{\lm'})\) has a tautological
section \(s_{\tau}\) that take value \(X\) at a point \((\calV_{k-1},\calV'_k,\calV''_k,\calV_{k+1};X)\).
The variety \(W(\lm',\lm+2,\lm'')^{\pm}\) is a zero locus of the the tautological section of the vector bundle
\((\CC^N)^\vee\otimes \tau_\lm/\tau_{\lm+2}\).

Inside, \(M^\pm\) we have an exceptional divisor \(E=\PP^{k-1}(\tau)^\vee\times \PP^{N-k-1}(\CC^N/\tau)\)
where \(\tau\) is the tautological bundle over \(\Gr(k,N)\). Other words,
the exceptional divisor fibers over \(\Gr(k,N)\) with fibers \(\PP^{k-1}\times \PP^{N-k-1}\).
The map \(\pi_+\) sends \(E\) to \(\PP^{k-1}(\tau)^\vee\subset M^+\) which is the locus defined by the condition \(\calV'=\calV''\).
Similarly, the map \(\pi_-\) sends \(E\) to \(\PP^{N-k-1}(\CC^N/\tau)\subset M^-\) which is the locus defined by the same  condition \(\calV'=\calV''\).

 The line bundle  \(\Hom(\pi^*(\tau_{\lm'}/\tau_{\lm+2}),\pi^*(\tau_{\lm-2}/\tau_{\lm''}))\)
 has a tautological section \(s_\Delta\) whose zero-locus is the exceptional divisor. Thus we have a isomorphism of line bunldes
  \begin{equation}\label{eq:O(delta)}
   \calO(E)\simeq \Hom(\pi^*(\tau_{\lm'}/\tau_{\lm+2}),\pi^*(\tau_{\lm-2}/\tau_{\lm''}))\simeq
   \det\tau_{\lm+2}\otimes\det\tau_{\lm-2}\otimes
   \det\tau^\vee_{\lm'}\otimes \det\tau^\vee_{\lm''}.
 \end{equation}
 In the complement of \(E\) the variety \(W(\lm',\lm+2,\lm'')^{\pm}\) is isomorphic to \(W(\lm',\lm-2,\lm'')^{\pm}\) and that is reflected by the fact that \(W(\lm',\lm-2,\lm'')^\pm\) is the zero locus of
 the section \(s_\tau\cdot s_\Delta\).

  Thus we have shown that we have closely related presentations of the corresponding FM kernels
 \(\pi^{+*}(\widehat{\calO}_{FE})\) and  \(\pi^{-*}(\widehat{\calO}_{EF})\)
 \[ \Lambda^\bullet(\pi^*(\CC^N\otimes (\tau_{\lm'}/\tau_{\lm+2})^\vee),\check{s}_\tau)\otimes \pi^*(\det\tau_{\lm'})
\otimes \pi^*(\det \tau_{\lm''})^{N-k}\otimes\pi^*(\det \tau_{\lm+2})^{k-N}
 \]
 \[\Lambda^\bullet(\pi^*(\CC^N\otimes (\tau_{\lm'}/\tau_{\lm+2})^\vee(-E)),\check{s}_\tau\cdot \check{s}_\Delta)
 \otimes \pi^*(\det \tau_{\lm'})^{k+1-N}\otimes\pi^*(\det \tau_{\lm-2})^{N-k}.
 \]

 Now  we need to relate the Koszul complexes with differentials
 \(s_\tau\) and \(s_\tau\cdot s_\Delta\). For that we rely on the following homological algebra statement.

 \begin{proposition}
   Let \(V\) and \(\calL\) be a vector bundle and a line bundle on \(X\),
   \(g\in H^0(X,V),f\in H^0(X,\calL)\).
   Suppose that \(i\in I\) and \(i+1\notin I\) then there is
   a complex morphism
   \[\varphi_{i,I}:  (K(\calL^\vee,f)\otimes(\calL^\vee)^{d(i)} \otimes\Lambda^iV^\vee)[1-i] \to K^I(\calL,f;V^\vee,g),\]
   where \(d(i)=d(i,I)\),
   such that for the cone \(C(\phi_{i,I})\) we have the homotopy equivalence:
   \begin{equation}\label{eq:cone-add}
     C(\varphi_{i,I})\simeq K^{I'}(\calL,f;V^\vee,g), \quad I'=(I\setminus \{i\})\cup \{i+1\}.
   \end{equation}


 \end{proposition}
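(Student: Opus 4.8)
The plan is to prove this by a mapping–cone computation localized at the single homological position where the two generalized Koszul complexes differ. Writing $d=d(i,I)$, the first step is to record exactly how $K^I:=K^I(\calL,f;V^\vee,g)$ and $K^{I'}:=K^{I'}(\calL,f;V^\vee,g)$ are related. Since $i\in I$ and $i+1\notin I$, one has $d(I',j)=d(I,j)$ for all $j\neq i$ and $d(I',i)=d(I,i)-1=d-1$; consequently the two complexes agree termwise and in every differential except in position $i$. There the term $\Lambda^iV^\vee\otimes(\calL^\vee)^{d}$ of $K^I$ is replaced by $\Lambda^iV^\vee\otimes(\calL^\vee)^{d-1}$ in $K^{I'}$, and the single factor $f$ carried by the differential migrates: in $K^I$ it sits on the outgoing arrow $D^{I;i}_{f;g}=fD_g$ (to position $i-1$), whereas in $K^{I'}$ it sits on the incoming arrow $D^{I';i+1}_{f;g}=fD_g$ (from position $i+1$), the outgoing arrow of $K^{I'}$ being the plain $D_g$.

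The second step is to write down $\varphi_{i,I}$ and check that it is a chain map. After the shift the source is the two–term complex
\[
\bigl[\,\Lambda^iV^\vee\otimes(\calL^\vee)^{d}\xrightarrow{\ f\ }\Lambda^iV^\vee\otimes(\calL^\vee)^{d-1}\,\bigr],
\]
namely $K(\calL^\vee,f)\otimes(\calL^\vee)^{d-1}\otimes\Lambda^iV^\vee$ placed in the degrees dictated by $[1-i]$, so that its left term sits in position $i$ and its right term in position $i-1$. I would define $\varphi_{i,I}$ by two components: the identity $\Lambda^iV^\vee\otimes(\calL^\vee)^{d}\to K^I_{-i}$ onto the position-$i$ summand of $K^I$, and the Koszul differential $D_g\colon\Lambda^iV^\vee\otimes(\calL^\vee)^{d-1}\to\Lambda^{i-1}V^\vee\otimes(\calL^\vee)^{d-1}=K^I_{-i+1}$. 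The one non-trivial square of the chain-map condition then reads $D_g\circ f=fD_g$, which holds because multiplication by $f$ (a map of $\calL^\vee$-powers) commutes with $D_g$; the remaining condition, at the lower end of the source, holds because the relevant composite factors through $D_g^2=0$. A sign coming from the shift $[1-i]$ is absorbed into the definition of $\varphi_{i,I}$.

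The final step is to form the cone and simplify it. In $C(\varphi_{i,I})=\text{source}[1]\oplus K^I$ the identity component of $\varphi_{i,I}$ becomes an isomorphism between the copy of $\Lambda^iV^\vee\otimes(\calL^\vee)^{d}$ coming from $\text{source}[1]$ and the position-$i$ term $K^I_{-i}$, and these sit in two adjacent degrees. Cancelling this acyclic pair by Gaussian elimination leaves exactly the terms of $K^{I'}$: the surviving source term $\Lambda^iV^\vee\otimes(\calL^\vee)^{d-1}$ lands in position $i$, its outgoing differential is the component $D_g$ of $\varphi_{i,I}$ (matching $i\notin I'$), and the elimination correction replaces the former composite $K^I_{-i-1}\xrightarrow{D_g}K^I_{-i}$ (into the cancelled term) by an arrow $K^I_{-i-1}\xrightarrow{\,fD_g\,}\Lambda^iV^\vee\otimes(\calL^\vee)^{d-1}$ onto the surviving term, the extra factor $f$ being supplied by the source differential. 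These are precisely the differentials of $K^{I'}$ in and around position $i$, and no other term or arrow is affected, so $C(\varphi_{i,I})\simeq K^{I'}$. I expect the only genuinely delicate point — and the step where an error is easiest to introduce — to be the sign bookkeeping: one must track the shift sign in $\varphi_{i,I}$ together with the Koszul signs through the Gaussian elimination to be sure the corrected differential is exactly $fD_g$, and not $-fD_g$, so that it agrees with $D^{I';i+1}_{f;g}$ on the nose.
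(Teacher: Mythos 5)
Your proposal is correct and takes essentially the same route as the paper: the paper's proof defines \(\varphi_{i,I}\) by exactly your two components (the identity onto the position-\(i\) term of \(K^I\) and \(D_g\) into position \(i-1\)) and then obtains \(K^{I'}\) by ``contracting the isomorphism arrow'' in the cone, which is precisely your Gaussian elimination with the correction arrow \(fD_g\) supplied by the source differential. Your tacit replacement of the twist \((\calL^\vee)^{d(i)}\) by \((\calL^\vee)^{d(i)-1}\) in the source agrees with the paper's own proof diagram, so you have simply corrected the same normalization the paper uses.
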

 \begin{proof}
   The map in the statement is given by the diagram below:
\[\begin{tikzcd}[cramped,sep=tiny]
	\cdots & {\Lambda^{i+1}V^\vee\otimes (\mathcal{L}^\vee)^{d(i)}} & {} & {\Lambda^{i}V^\vee\otimes (\mathcal{L}^\vee)^{d(i)}} && {\Lambda^{i-1}V^\vee\otimes (\mathcal{L}^\vee)^{d(i)-1}} && \cdots \\
	\\
	&&& {\Lambda^{i}V^\vee\otimes (\mathcal{L}^\vee)^{d(i)}} && {\Lambda^{i}V^\vee\otimes (\mathcal{L}^\vee)^{d(i)-1}}
	\arrow["{D_g}"', from=1-2, to=1-4]
	\arrow["{fD_g}"', from=1-4, to=1-6]
	\arrow[from=1-6, to=1-8]
	\arrow["1"', from=3-4, to=1-4]
	\arrow["f", from=3-4, to=3-6]
	\arrow["{D_g}", from=3-6, to=1-6]
\end{tikzcd}\]
after we contract the isomorphism arrow.
\end{proof}



We need an iterated version of the above result for two presentations of the complex
\(K^{[1,N-k]}(\calL;V^\vee)\), \(\mathrm{rank}(V)=N\):
\begin{corollary}
  There are two iterated cone presentations for the complexes \(K^{[1,N-k]}(\calL,f;V^\vee,g)\)
  and \(K^{[1,N-k]}(\calL,f;V^\vee,g)\otimes (\calL^\vee)^{k-N}\), respectively:
  \begin{equation}\label{eq:cone-}
  C(K(V\otimes \calL,f g);\psi^-_{1,N},\psi_{1,N-1}^-\mydots,\psi^-_{1,N-k},\psi^-_{2,N}\mydots,\psi^-_{2,N-k+1}\mydots,
    \psi^-_{k-1,N},\psi^-_{k-1,N-1},\psi^-_{k,N}),
  \end{equation}
  \begin{equation}\label{eq:cone+}
  C(K(V,g);\psi^+_{1,1},\psi^+_{1,2}\mydots,\psi^+_{1,N-k},\psi^+_{2,1}\mydots,\psi^+_{2,N-k-1}\mydots,\psi^+_{N-k-1,1},
    \psi^+_{N-k-1,2},\psi^+_{N-k,1}),
  \end{equation}
  where  the iterated cones are defined inductively:
  \[C(K;)=K,\quad C(K;\varphi_1,\dots,\varphi_{m+1})=[C(K;\varphi_1,\dots,\varphi_m)\xleftarrow{\varphi_{m+1}}K(\calL^\vee,f)\otimes W_{\varphi_{m+1}}],\]
  for the vector bundles \(W_{\varphi}\) that are defined by:
  \[W_{\psi^+_{i,j}}=\mathbf{q}^{2j}(\calL^\vee)^{-i}\otimes\Lambda^jV[-j],\quad W_{\psi^-_{i,j}}=\mathbf{q}^{2j}(\calL^\vee)^{j-i}\otimes \Lambda^jV[-j] \]
\end{corollary}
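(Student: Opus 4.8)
The plan is to obtain both presentations by iterating the single mutation furnished by the Proposition, organized along two explicit sequences of index sets. Recall that for $i\in I$, $i+1\notin I$ the Proposition exhibits $K^{I'}$, $I'=(I\setminus\{i\})\cup\{i+1\}$, as the cone of a map $\varphi_{i,I}$ out of $K(\calL^\vee,f)\otimes(\calL^\vee)^{d(i)}\otimes\Lambda^iV^\vee[1-i]$ into $K^I$; since $d(I',j)=d(I,j)$ for $j\neq i$ and $d(I',i)=d(I,i)-1$, a single mutation changes the complex only in the slot $j=i$, which is exactly what lets the induction proceed one slot at a time. I would first record the two boundary instances that start and terminate the sweeps: the creation of an index in the bottom slot (formally the $i=0$ case, whose attached term is $K(\calL^\vee,f)$ up to a twist and shift), and the annihilation of an index by advancing it past the top slot $N$ (the $i=N$ case, for which $i+1=N+1$ is invisible to $d(\cdot,j)$ and $\Lambda^{N}V^\vee$ is the last nonzero exterior power).

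With these in hand I would write out the two mutation paths. For \eqref{eq:cone+} I start from $K(V,g)=K^{\emptyset}$ and install the block $[1,N-k]$ by the create-and-raise sweep encoded in the double index of $\psi^+_{i,j}$: sweep $i$ creates an index in slot $1$ and advances it until it reaches its resting slot, the legality of each advance (source occupied, target free) being guaranteed precisely by the triangular order, so that after the $\binom{N-k+1}{2}$ mutations listed in \eqref{eq:cone+} the block is $[1,N-k]$. For \eqref{eq:cone-} I start from the fully twisted $K(V\otimes\calL,fg)=K^{[1,N]}$ and run the complementary sweep recorded by $\psi^-_{i,j}$, advancing the top $k$ indices out past slot $N$ in the order dictated by the display; the same single-slot analysis shows the block shrinks to $[1,N-k]$. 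In each case the iterated cone $C(K;\varphi_1,\dots,\varphi_m)$ is well defined up to homotopy, being built one cone at a time, so an induction on $m$ identifies the total complexes with $K^{[1,N-k]}\otimes(\calL^\vee)^{k-N}$ in the case of \eqref{eq:cone+} and with $K^{[1,N-k]}$ in the case of \eqref{eq:cone-}.

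The genuinely delicate part—and the main obstacle—is matching, step by step, the term $K(\calL^\vee,f)\otimes(\calL^\vee)^{d(i)}\otimes\Lambda^iV^\vee[1-i]$ produced by the Proposition with the prescribed summand $K(\calL^\vee,f)\otimes W_{\psi^\pm_{i,j}}$, where $W_{\psi^+_{i,j}}=\mathbf{q}^{2j}(\calL^\vee)^{-i}\otimes\Lambda^jV[-j]$ and $W_{\psi^-_{i,j}}=\mathbf{q}^{2j}(\calL^\vee)^{j-i}\otimes\Lambda^jV[-j]$. This forces one to reconcile four separate bookkeeping data at every node: the $V$ versus $V^\vee$ variance (passing from $\Lambda^iV^\vee$ to $\Lambda^jV$ via the perfect pairing $\Lambda^iV^\vee\simeq\Lambda^{N-i}V\otimes\det V^\vee$), the line-bundle twist exponents $d(i)$ against $-i$ or $j-i$, the cohomological shift $[1-i]$ produced by the cone against the $[-j]$ normalization of $W$, and the $\Csq$-weights recorded by the powers of $\mathbf{q}$. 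The accumulation of these, together with the global twist $(\calL^\vee)^{k-N}$ separating the two target complexes, is where essentially all the work sits; once the dictionary between $(i,j)$ and the pair (mutated slot, attached exterior power) is fixed so that each node is a verified instance of \eqref{eq:cone-add}, the two iterated-cone identities follow formally.
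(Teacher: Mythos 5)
The paper contains no written proof of this corollary---it is stated as the ``iterated version'' of the Proposition and immediately applied---so your overall plan, organizing repeated applications of \eqref{eq:cone-add} into two mutation sweeps with boundary creation/annihilation moves, is certainly the intended reading. Your treatment of \eqref{eq:cone-} is essentially right: starting from \(K^{[1,N]}=K(V\otimes\calL,fg)\), each attachment slides one index of \(I\) up by one slot, the top \(k\) indices get pushed past \(N\) in \(k+(k-1)+\cdots+1\) legal moves, and the twists match, since at the sweep-\(i\) mutation of slot \(j\) one has \(d(j)-1=j-i\); this agrees with the ranges \(i\in[1,k]\), \(j\in[N-k+i,N]\) that the paper actually uses in \eqref{eq:fib-} (the displayed list in \eqref{eq:cone-} is off by one relative to \eqref{eq:fib-}). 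One caveat even here: your dictionary reconciling \(\Lambda^iV^\vee\) with \(\Lambda^jV\) via the pairing \(\Lambda^iV^\vee\simeq\Lambda^{N-i}V\otimes\det V^\vee\) (i.e.\ \(j=N-i\)) is the wrong one; the twist check just made, and the terms of \eqref{eq:fib-}, show that \(j\) in \(\psi^\pm_{i,j}\) is literally the mutated slot and the paper is simply careless about \(V\) versus \(V^\vee\). Your complementary-degree reading would plant a spurious \(\det V^\vee\) in every attached term and shift all the \(\mathbf{q}\)-weights.

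The genuine gap is in your derivation of \eqref{eq:cone+}. First, the ``creation'' move is not an instance of the Proposition: passing from \(K^I\) to \(K^{I\cup\{1\}}\) raises \(d(I,j)\) by one at \emph{every} slot \(j\geq 1\), so for \(N\geq 2\) the two complexes differ in \(N\) slots, whereas the cone in \eqref{eq:cone-add} alters exactly one; your formal \(i=0\) case does not exist, and with it the whole create-and-raise sweep collapses. Second, and independently, every tower in which each step is, as you require, ``a verified instance of \eqref{eq:cone-add}'' keeps all intermediate complexes of the form \(K^{I}\) up to a \emph{fixed} global twist (the Proposition never changes one), so starting from \(K(V,g)=K^{\emptyset}\) it can only total the untwisted \(K^{[1,N-k]}\), never \(K^{[1,N-k]}\otimes(\calL^\vee)^{k-N}\); moreover every legal move has \(i\in I\), \(i\geq 1\), hence attached twist \((\calL^\vee)^{d(i)-1}\) with \(d(i)-1\geq 0\), so the negative exponents in \(W_{\psi^+_{i,j}}=\mathbf{q}^{2j}(\calL^\vee)^{-i}\otimes\Lambda^jV[-j]\) are unreachable by your sweep. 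What \eqref{eq:cone+} requires is a global reversal \emph{before} sweeping: apply \(\Lambda^jV^\vee\simeq\Lambda^{N-j}V\otimes\det V^\vee\) to the whole complex \(K^{[1,N-k]}\otimes(\calL^\vee)^{k-N}\) (equivalently, dualize and regrade), which exhibits it as a generalized Koszul complex read from the top, with the roles of twisted and untwisted slots exchanged and with ordinary Koszul end \(K(V,g)\); the shrink sweep of \eqref{eq:cone-} then applies with \(\binom{N-k+1}{2}\) moves, and the overall factor \((\calL^\vee)^{k-N}\) together with the positive powers \(\calL^{i}\) in the attached bundles appears at once as an artifact of the reversal, not as an accumulation of per-step bookkeeping. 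As written, your \eqref{eq:cone+} sweep proves a different, untwisted statement whose attached terms do not match the \(W_{\psi^+_{i,j}}\) of the corollary.
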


Now we apply the corollary for the case \(X=Z^{\pm}\) and \(V=\pi^*(\CC^N\otimes (\tau_{\lm'}/\tau_{\lm+2})^\vee)\) and
\(g=s_\tau\), \(\calL=\calO(E)\), \(f=s_\Delta\). Thus we have two iterated cone presentations for the complex
\[\widehat{K}=K^{[1,N-k]}(\calL,f;V^\vee,g)\otimes (\calL^\vee)^{k-N}\otimes (\det\tau_{\lm+2})^{k-N}\otimes
  (\det\tau_{\lm''})^{N-k}\otimes \det\tau_{\lm'}.\] Below we apply push-forwards \(\pi^+_*\) and
\(\pi^-_*\) to these two presentations.

First let us compute the push-forward  \(\pi_{+*}(\widehat{K})\). The complex \(K(V,g)\otimes (\det \tau_{\lm+2})^{k-N}\otimes (\det\tau_{\lm''})^{N-k}\otimes \det\tau_{\lm'}\) is one of the
ends of the iterated cone. Since  the last complex is exactly \(\widehat{\calO}_{FE}\) and \(\pi_{+*}(\calO)=\calO\), we conclude that 
\(p_{+*}\circ \pi_{+*}(\widehat{K})\) contains \(\calF\circ\calE\) at one end of the push-forward iterated cone.

Similarly we can analyze \(\pi_{-*}(\widehat{K})\). Indeed, the end of the other iterated cone presentation is the complex \(K(V\otimes \calL,f\cdot g)\otimes (\det\tau_{\lm-2})^{N-k}\otimes (\det\tau_{\lm'})^{k-N+1}\).  The last complex is \(\widehat{\calO}_{EF}\) and since \(\pi_{-*}(\calO)=\calO\), we conclude that 
\(p_{-*}\circ \pi_{-*}(\widehat{K})\) contains \(\calE\circ\calF\) at one end of the push-forward iterated cone.

What is left for us to show that the push-forwards \(p_{+*}\circ\pi_{+*}\) and \(p_{-*}\circ \pi_{-*}\) annihilate  most of the rest of the terms of the
iterated cones. In details, the iterated cone \eqref{eq:cone-} twisted by  \((\det\tau_{\lm-2})^{N-k}\otimes (\det\tau_{\lm'})^{k-N+1}\) has the following
terms, other than the extreme end discussed above:
\begin{multline}\label{eq:fib-}
(\calL^\vee)^{j-i}\otimes(\det\tau_{\lm-2})^{N-k}\otimes (\det\tau_{\lm'})^{k-N+1}\otimes\calO_E\otimes\Lambda^jV[-j]
  \simeq (\det \tau_{\lm-2})^{N-k-j+i}\\ \otimes (\det \tau_{\lm+2})^{i-j}\otimes \calO_E\otimes \Lambda^j (\CC^N\otimes (\tau_{\lm'}/\tau_{\lm+2})^\vee)[j]\otimes
  (\det\tau_{\lm'})^{k-N+j-i+1}\otimes (\det\tau_{\lm''})^{j-i},
\end{multline}
where \(i=1,\dots, k\) and \(j=N-k+i,\dots,N\). In the discussion below we ignore \(\mathbf{q}\) shifts.

The fibers of \(\pi_{-}\) over \(\Delta^-=\pi_{-}(E)\) are projective spaces \(\PP^{k-1}\). On the other hand the restriction of the above sheaf on
a fiber of \(\pi_{-}\) over a point of \(\Delta^-\) is the vector bundle \((\det\tau_{\lm+2})^{i}\otimes \Lambda^j\CC^N[j]\). Since \((\det\tau_{\lm+2})=\calO(-1)\) as
line bundle on \(\PP^{k-1}\) the only terms that survive the push-forward \(\pi_{-*}\) are the terms with \(i=k\). 

Now we apply the push forward \(p_{-*}\) to the term that survived the push-forward \(\pi_{-*}\).  This is the term with \(i=k, j=N\):
\[ (\det \tau_{\lm+2})^{k}\otimes \calO_E\otimes\det(\tau_{\lm'})^{-N+1}\otimes (\det\tau_{\lm''})^{N-k}[n],\]
On the other hand
\((\det\tau_{\lm+2})^k\otimes(\det\tau_{\lm'})^{k+1}\)  is the relative dualizing sheaf for the fibers of \(\pi_-\) over \(\Delta^-\).  Hence the push-forward along the map \(\pi_{-*}\) of the
term for \(i=k,j=N\) is the line bundle
\((\det\tau_{\lm'})^{k-N}\otimes (\det\tau_{\lm''})^{N-k}\otimes \calO_{\Delta^-}[k-N-1]\).
Thus the push-forward along \(p_-\circ \pi_-\) of the term with \(i=k,j=N\) yields, with a correct \(\mathbf{q}\)-degree shift, \( \mathbf{q}^{2N}\calO_{\Delta}[k-N-1]\).


For the other map:
\begin{multline}
  (\calL^\vee)^{-i}\otimes (\det\tau_{\lm+2})^{k-N}\otimes
  (\det\tau_{\lm''})^{N-k}\otimes \det\tau_{\lm'}\otimes  \calO_E\otimes\Lambda^jV[-j]\simeq
  (\det\tau_{\lm-2})^{i}\\\otimes (\det\tau_{\lm+2})^{k-N+i}\otimes \calO_E \otimes \Lambda^j (\CC^N\otimes (\tau_{\lm'}/\tau_{\lm+2})^\vee)[j]\otimes
  (\det\tau_{\lm'})^{1-i}\otimes (\det\tau_{\lm''})^{N-k-i}
  \end{multline}
where \(i=1,\dots,N-k\), \(j=1,\dots,N-k-i+1\).

The fibers of the map \(\pi_+\) over \(\Delta^+=\pi_+(E)\) are projective spaces \(\PP^{N-k-1}\).
The restriction to a fiber of \(\pi_+\)   over a point \(\Delta^+\) is \((\det\tau_{\lm-2})^{i}\otimes \Lambda^j\CC^N[j]\). Thus the only term that survives is
\(i=N-k\), \(j=0\). This term is 
\[ (\det\tau_{\lm-2})^{N-k}\otimes \calO_E \otimes 
  (\det\tau_{\lm'})^{1-N+k}.
 \]

 The relative dualizing sheaf of \(\pi_+\) over \(\Delta^+\) is \((\det\tau_{\lm-2})^{N-k}\otimes 
  (\det\tau_{\lm'})^{1-N+k}\).
  Hence result of the push-forward \(\pi_{+*}\) of the term with \(i=N-k,j=0\) is \(\calO_{\Delta^+}[N-k-1]\).
  Thus the push-forward   \(p_{+*}\circ \pi_{-*}\) yields \(\mathbf{q}^0\calO_{\Delta}[N-k-1]\).

  \subsection{Lowest weight space}
\label{sec:lowest-weight-space}
Part of the arguments from the previous section hold for the case \(\lambda=-N\), \(k=N\). Indeed, the formulas \eqref{eq:kerEF}, \eqref{eq:kerEF-2}
for the functor \(\calF(\lm+2)\circ\calE(\lm)\) hold:
\[\calE(-N+2)\circ \calF(-N)=p_{+*}(\calO_{\hat{W}(N,N-2,N)}).\]

The image of \(p_{+}(\hat{W}(-N,-N+2,-N))\) is a subvariety of the diagonal \(\Delta\subset Y(-N)\times Y(-N)=\gl(N)\times \gl(N)\). Thus to
complete our proof we need to compute the push-forward \(p'_{+*}(\calO_{\hat{W}(-N,N+2,-N)})\) for the map \(p'_+:M'\to Y(N)\) defined by
\(p'_+(\calV_{N-1},X)=X\). The image \(p'_+(\hat{W}(-N,-N+2,-N))\) consists of the degenerate matrices thus we
expect:
\begin{equation}\label{eq:OO}
  p'_{+*}(\calO_{\hat{W}(-N,N+2,-N)})=[\mathbf{q}^{2N}\calO\xrightarrow{\det}\calO].\end{equation}

To prove the last formula we observe that \(\hat{W}(-N,N+2,-N)\) is a subvariety of \(Gr(N-1,N)\times \gl(N)\) that is a zero section of the
tautological section \(s\) of the vector bundle \(\pi^*((\CC^N)^\vee\otimes \CC^N/\tau_{-N+2})\), where \(\pi\) is the projection onto \(Gr(N-1,N)\).
Thus we have \[p'_{+*}(\calO_{\hat{W}(-N,N+2,-N)})=pr_*\bigl(K(\pi^*(\CC^N\otimes (\CC^N/\tau_{-N+2})^\vee),s)\bigr),\]
where \(pr\) is the projection onto \(\gl(N)\).  Since \((\CC^N/\tau_{-N+2})^\vee=\calO(-1)\), we conclude that the only terms in the homological degrees
\(0\) and \(N\) survive the push-forward \(pr_*\) and resulting complex is exactly \eqref{eq:OO}

\section{Towards the categorification of $\Uqglmn$}
\label{sec:next}

An advantage of the algebro-geometric approach to the categorification of $\Uqgloo$ is that, conjecturally,
its framework can be extended easily to $\Uqglmn$.

Consider a \emph{permuted} standard basis $e_1,\ldots,e_{m+n}$ of the defining $\glmn$ representation $V=\ICmn$. "Permuted" means that we do not assume that the first $m$ vectors are even, whereas the last $n$ ones are odd. Rather, we specify the parity map $\xf\colon \{1,\ldots,m+n\}\rightarrow \ZZ_2$ such that $e_i$ is even if $\xf(i) = 0$ and $e_i$ is odd if $\xf(i) = 1$, so $|\xf^{-1}(0)| = m$ and $|\xf^{-1}(1)| = n$. The weight subspaces $(\grVoN)_{\bflam}\subset \grVoN$ are indexed by weights $\bflam = (\lam_1,\ldots,\lam_{m+n})$ so that the weight of $e_i$ is  $\lam_j(e_i) = \delta_{ij}$, where $\delta_{ij}$ is the Kronecker symbol.

To a weight $\bflam$ we associate a partial flag variety
\[
\Fl(\bflam) = \{
\bfgrV = (0=\grV_0 \subset \grV_1 \subset\ldots\subset \grV_{m+n} = \ICN)\;|
\; \dim \grV_i = \lam_1 + \cdots + \lam_i
\}
\]
and (the total space of) a bundle $Y(\bflam)\rightarrow \Fl(\bflam)$ defined similar to $Y(\lam)$:
\begin{equation}
\label{eq:dfYbf}
Y(\bflam) = \Biggl\{ (\bfgrV,\grX)\in \Fl(\bflam)\times\End(\ICN)\;|\; \grX(\grV_i) \subset
\begin{cases}
\grV_{i},&\text{if $e_i$ is even,}
\\
\grV_{i-1},&\text{if $e_i$ is odd.}
\end{cases}
\Biggr\}
\end{equation}
The group $\Csq$ again acts on $Y(\bflam)$ by scaling the fibers: $X\mapsto t^2 X$,
and to a weight subspace of the weight $\bflam$ we associate the category $ \DbCs\bigl(Y(\bflam)\bigr) $.

Note that each basis vector $e_i$ corresponds to a pair of subspaces $\grV_{i-1}\subset\grV_i$ and a subspace $\grV_i$ corresponds to a pair of basis vectors $e_i,e_{i+1}$.

Similar to the categorification of $\gln$, the functors $\calE_{i,i+1}$ and $\calF_{i+1,i}$ categorifying the corresponding generators of $\glmn$ are expected to be presented by \FM\ kernels supported on the subvarieties $W(\bflam,\bflam')\subset Y(\bflam)\times Y(\bflam')$, where the weights $\bflam$ and $\bflam'$ are related by the action of $E_{i,i+1}\subset\glmn$:
\[
\lam'_j = \begin{cases}
\lam_j - 1,&\text{ if  $j=i$,}
\\
\lam_j + 1, &\text{ if $j=i+1$,}
\\
\lam_j, &\text{otherwise}.
\end{cases}
\]
Namely,
\[
W(\bflam,\bflam') = \{
(\bfgrV,\grX,\bfgrV',\grX')\;|\;\grV_i\subseteq \grV_i',\;\grX = \grX'
\}.
\]
The remaining challenge is to select the appropriate line bundles supported on $W(\bflam,\bflam')$ and to prove the $\Uqglmn$ relations between the \FM\ kernels.

One could further extend this construction to the tensor product of the \emph{symmetric} powers of $\grV$ by using the Slodowy slices. Namely, let $\Slbmu\subset \glN$ be the Slodowy slice to the orbit of the nilpotent matrix whose Jordan blocks have the sizes $\mu_i$ of an ordered partition $\bfmu=(\mu_1,\ldots,\mu_{\xl})$ such that $\mu_1\leq\cdots\leq \mu_{\xl}$ and $\mu_1+\cdots+\mu_{\xl} = N$. Then the category $\DbCs\bigl( Y(\bflam,\bfmu)\bigr)$ for
\[
Y(\bflam,\bfmu) = 
\Biggl\{ (\bfgrV,\grX)\in \Fl(\bflam)\times\Slbmu\;|\;\grX(\grV_i) \subset
\begin{cases}
\grV_{i},&\text{if $e_i$ is even,}
\\
\grV_{i-1},&\text{if $e_i$ is odd.}
\end{cases}
\Biggr\}
\]
should represent the weight $\bflam$ subspace of the tensor product of symmetric powers
\begin{equation}
\label{eq:tensbfmu}
\rmS^{\bfmu} V = \rmS^{\mu_1} V\otimes\cdots\otimes \rmS^{\mu_{\xl}} V.
\end{equation}
Here symmetrization is carried in accordance with the parity, that is, for example, $\rmS^\mu \IC^{0|n} \cong \Lambda^\mu \IC^n$. 

Conjecturally, one could extend the categorification farther to include symmetric powers of the \emph{shifted} defining representation $\IC^{n|m}$ and of their duals into the categorified tensor product~\eqref{eq:tensbfmu}. This would require the use of more general bow-arrow quiver varieties with some arrow and bow edges being "Legendre transformed" as described in~\cite{RR}, with matrix factorizations replacing coherent sheaves.

Note that from the perspective of the definition~\eqref{eq:dfYbf}, the  construction in \cite{CautisKamnitzerLicata10} described the categorification of the tensor power of the \emph{odd} version of the defining representation of $\glt$. That is, their $\glt$ is the $\glzt$ (rather than $\gltz$) member of the $\glmn$ family and their $V$ is $\IC^{0|2}$ rather than $\IC^{2|0}$. As a consequence, their construction extends to the categorification of the tensor product of the exterior powers of $\IC^2$ appearing as symmetric powers of $\IC^{0|2}$. The categorification of the symmetric powers of the defining $\IC^2$ representation of $\gl(2)$ would require presenting $\glt$ as $\gltz$ and hence the use of the "parabolic" rather than "nilpotent"  bundle over $\GrkN$:
\[
\YNlm=\{(\grV,\grX)\in\GrkN\times\End(\ICN)\,|\,\grV\subset \IC^N,\;\dim \grV = k,\;
\grX(\grV)\subset\grV\},
\]
instead of $\YNlm = \rmTs\GrkN$.


\begin{thebibliography}{1}

\bibitem{CautisKamnitzerLicata10}
S.~Cautis, J.~Kamnitzer, and A.~Licata.
\newblock Coherent sheaves and categorical \(\mathfrak{sl}(2)\) actions.
\newblock {\em Duke Mathematical Journal}, 154(1), July 2010.

\bibitem{KhovanovSussan16}
M.~Khovanov and J.~Sussan.
\newblock A categorification of the positive half of quantum
  $\mathfrak{gl}(m|1)$.
\newblock {\em Transactions of the American Mathematical Society},
  369(3):1627–1664, May 2016.

\bibitem{ManionRouquier20}
A.~Manion and R.~Rouquier.
\newblock {Higher representations and cornered Heegaard Floer homology}, 2020.

\bibitem{RR}
R.~Rimanyi and L.~Rozansky.
\newblock{New quiver-like varieties and Lie superalgebras}.
\newblock{\em Communications in Mathematical Physics}, 400:341-370, December 2022.

\bibitem{RozanskySaleur93}
L.~Rozansky and H.~Saleur.
\newblock {S- and T-matrices for the super U(1,1) WZW model application to
  surgery and 3-manifolds invariants based on the Alexander-Conway polynomial}.
\newblock {\em Nuclear Physics B}, 389(2):365–423, January 1993.

\bibitem{Sartori15}
A.~Sartori.
\newblock Categorification of tensor powers of the vector representation of
  $u_q({\mathfrak {gl}}(1|1))$.
\newblock {\em Selecta Mathematica}, 22(2):669–734, September 2015.

\bibitem{Tian14}
Yin Tian.
\newblock A categorification of {$U_T({sl}(1|1))$} and its tensor product
  representations.
\newblock {\em Geom. Topol.}, 18(3):1635--1717, 2014.

\end{thebibliography}

\end{document}